\theoremstyle{plain}
\newtheorem{theorem}{Theorem}[section]
\newtheorem{lemma}[theorem]{Lemma}
\theoremstyle{remark}
\newtheorem{remark}[theorem]{Remark}
\theoremstyle{definition}
\newtheorem{example}{Example}[section]
\title{}
\begin{document}
\title [A combinatorial proof of the trace Cayley-Hamilton theorem]{A combinatorial proof of the trace Cayley-Hamilton theorem}
\author[Sudip Bera]{Sudip Bera}
\address[Sudip Bera]{Faculty of Mathematics, Dhirubhai Ambani University, Gandhinagar, India}
\email{sudip\_bera@dau.ac.in}

\begin{abstract}
The deep interconnection between linear algebra and graph theory allows one to interpret classical matrix invariants through combinatorial structures.  
To each $n \times n$ matrix $A$ over a commutative ring $\mathbb{K}$, one can associate a weighted directed graph $\mathcal{D}(A)$, where the algebraic behavior of $A$ is reflected in the combinatorial properties of $\mathcal{D}(A)$.  
In particular, the determinant and characteristic polynomial of $A$ admit elegant formulations in terms of sign-weighted sums over linear subdigraphs of $\mathcal{D}(A)$, thereby providing a graphical interpretation of fundamental algebraic quantities.
Building upon this correspondence, we establish a combinatorial proof of the \emph{Trace Cayley–Hamilton theorem}.  
This theorem furnishes explicit trace identities linking the coefficients of the characteristic polynomial of $A$ with the traces of its successive powers.  
Precisely, if 
\[
p_A(\lambda) = \lambda^n + d_1 \lambda^{n-1} + \cdots + d_n
\]
is the characteristic polynomial of $A$, then for every integer $r \ge 1$, the traces of powers of $A$ satisfy
\[
\text{Tr}(A^r) + \text{Tr}(A^{r-1})d_1 + \cdots + \text{Tr}(A^{r-(n-1)})d_{n-1}+
\begin{cases}
\text{Tr}(A^{r-n})d_n = 0, & r > n,\\[4pt]
r d_r = 0, & 1 \le r \le n.
\end{cases}
\]


		\medskip
		
\textit{Keywords:}		
Combinatorial proof; Digraphs; Trace Cayley-Hamilton theorem 
			
			\medskip  
			
\textit{MSC2020:} 05C30; 05C50 
			
\end{abstract}

\maketitle

\section{Introduction}
\label{sec:introduction}
Let $\mathbb{K}$ be a commutative ring, and let $A \in \mathbb{K}^{n \times n}$ be a square matrix of order $n$.  
The characteristic polynomial of $A$ is defined by
\begin{equation}\label{Eqn:c-h-thm,coeef-as-d}
p_A(\lambda) = \lambda^n + d_1 \lambda^{n-1} + d_2 \lambda^{n-2} + \cdots + d_{n-k} \lambda^{k} + \cdots + d_n.
\end{equation}
The classical \emph{Cayley-Hamilton theorem} states that
\[
p_A(A) = A^n + d_1 A^{n-1} + d_2 A^{n-2} + \cdots + d_{n-k} A^{k} + \cdots + d_n I_n = O,
\]
where $O$ denotes the zero matrix.  
Various combinatorial proofs of such fundamental results are well known in the literature~\cite{Comb-Newton-Girad-Disc_Mukherjee-Bera,Straubing1983,11,22}.

\medskip

A less conventional but equally elegant corollary, referred to as the \emph{Trace Cayley-Hamilton theorem}, asserts that
\begin{equation}\label{Eqn:TraceCH}
\operatorname{Tr}(A^r) + \operatorname{Tr}(A^{r-1}) d_1 + \cdots + \operatorname{Tr}(A^{r-(n-1)}) d_{n-1} +
\begin{cases}
\operatorname{Tr}(A^{r-n}) d_n = 0, & r > n,\\[4pt]
r\, d_r = 0, & 1 \le r \le n.
\end{cases}
\end{equation}
When $r \ge n$, this identity follows immediately from the Cayley--Hamilton theorem by multiplying both sides of $p_A(A) = O$ by $A^{r-n}$ and then taking traces.  
However, for arbitrary $r$, no direct algebraic derivation is known.  
In this paper, we present a combinatorial proof of the Trace Cayley--Hamilton theorem using the framework of \emph{weighted directed graphs}.

\smallskip

To every matrix $A = (a_{ij}) \in \mathbb{K}^{n \times n}$, we associate a weighted directed graph $\mathcal{D}(A)$ with vertex set $[n] = \{1, 2, \ldots, n\}$.  
For each ordered pair $(i,j)$, there is a directed edge from $i$ to $j$ with weight $a_{ij}$.  
We briefly recall a few relevant graph-theoretic notions; see~\cite{Brualdi-Cvetkovic-combinatorial-matrix-thy} for further details.

A \emph{linear subdigraph} $\gamma$ of $\mathcal{D}(A)$ is a collection of vertex-disjoint directed cycles.  
A loop, i.e., a cycle of length one, is regarded as a cycle centered at a single vertex.  
The \emph{weight} of $\gamma$, denoted $w(\gamma)$, is the product of the weights of all edges belonging to $\gamma$.  
The number of cycles in $\gamma$ is denoted by $c(\gamma)$.  
For illustration, all linear subdigraphs of the digraph in Figure~\ref{fig:D(A)} are displayed in Figures~\ref{fig:lsd-1} and~\ref{fig:lsd-2}.

\medskip

The cycle decomposition of permutations yields the following elegant expression for the determinant of $A$:
\begin{equation}\label{Eqn:LSD Defn of Det}
\det(A) = \sum_{\gamma} (-1)^{n - c(\gamma)} \, w(\gamma),
\end{equation}
where the sum runs over all linear subdigraphs $\gamma$ of $\mathcal{D}(A)$ that involve all $n$ vertices.

\medskip

The \emph{length} of a linear subdigraph $\gamma$, denoted $L(\gamma)$, is the total number of edges in $\gamma$.  
Let $L_r$ denote the set of all linear subdigraphs of $\mathcal{D}(A)$ having length $r$.  
We then define
\begin{equation}\label{Eqn:Defn of lr;sgn sum of weightd of lsd of length r}
\ell_r \triangleq \sum_{\gamma \in L_r} (-1)^{c(\gamma)} \, w(\gamma).
\end{equation}

\smallskip

Alternatively, the characteristic polynomial of $A$ can also be expressed in terms of the sums of its principal minors as
\begin{equation}\label{eqn:C-H-Thm-f-as coef}
p_A(\lambda) = \lambda^n - f_1 \lambda^{n-1} + f_2 \lambda^{n-2} - \cdots + (-1)^{n-k} f_{n-k} \lambda^{k} + \cdots + (-1)^n f_n,
\end{equation}
where $f_i$ denotes the sum of all principal minors of order $i$ of $A$.  
Comparing Equations~\eqref{Eqn:c-h-thm,coeef-as-d} and~\eqref{eqn:C-H-Thm-f-as coef}, we obtain
\[
d_i = (-1)^i f_i, \qquad \text{for each } i \in [n].
\]
Since $f_i$ is the sum of all principal minors of order $i$, it can equivalently be expressed as a weighted sum over linear subdigraphs of $\mathcal{D}(A)$:
\begin{equation}\label{Eqn:fi=sum lsd}
f_i = \sum_{\gamma} (-1)^{i - c(\gamma)} \, w(\gamma),
\end{equation}
where the summation extends over all linear subdigraphs $\gamma$ of $\mathcal{D}(A)$ containing $i$ vertices.

\begin{lemma}\label{Lemma:Coeeffiient of Cha poly of A is sign sum of LSD}
Let $p_A(\lambda)$ be the characteristic polynomial of $A$ as in~\eqref{Eqn:c-h-thm,coeef-as-d}, and let $\ell_i$ be defined by~\eqref{Eqn:Defn of lr;sgn sum of weightd of lsd of length r}.  
Then, for each $i \in [n]$, we have
\[
d_i = \ell_i.
\]
\end{lemma}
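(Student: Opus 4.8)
The plan is to reduce the claim to the two facts already on the table, namely the identity $d_i = (-1)^i f_i$ obtained by comparing \eqref{Eqn:c-h-thm,coeef-as-d} with \eqref{eqn:C-H-Thm-f-as coef}, and the combinatorial formula \eqref{Eqn:fi=sum lsd} for $f_i$ as a signed weighted sum over linear subdigraphs on $i$ vertices. The one genuinely new ingredient is a bookkeeping observation relating the \emph{length} statistic to the \emph{vertex count} statistic, and once that is in place the rest is a one-line sign computation.

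First I would record the key structural fact: if $\gamma$ is a linear subdigraph, then $\gamma$ is by definition a vertex-disjoint union of directed cycles (loops counted as cycles of length one), so every vertex appearing in $\gamma$ has in-degree and out-degree exactly one within $\gamma$. Summing out-degrees, the number of edges of $\gamma$ equals the number of vertices appearing in $\gamma$; that is, $L(\gamma)$ equals $|V(\gamma)|$. Consequently the set $L_i$ of linear subdigraphs of length $i$ coincides exactly with the set of linear subdigraphs of $\mathcal{D}(A)$ that involve exactly $i$ vertices — the very index set over which the sum in \eqref{Eqn:fi=sum lsd} runs.

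Next I would carry out the sign computation. Using $L_i = \{\gamma : |V(\gamma)| = i\}$ together with $(-1)^{\,i - c(\gamma)} = (-1)^i(-1)^{-c(\gamma)} = (-1)^i(-1)^{c(\gamma)}$, equation \eqref{Eqn:fi=sum lsd} becomes
\[
f_i = \sum_{\gamma \in L_i} (-1)^{\,i - c(\gamma)}\, w(\gamma) = (-1)^i \sum_{\gamma \in L_i} (-1)^{c(\gamma)}\, w(\gamma) = (-1)^i \ell_i,
\]
by the definition \eqref{Eqn:Defn of lr;sgn sum of weightd of lsd of length r} of $\ell_i$. Combining this with $d_i = (-1)^i f_i$ gives $d_i = (-1)^i \cdot (-1)^i \ell_i = \ell_i$, as claimed, for every $i \in [n]$.

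I do not anticipate a real obstacle here; the only point demanding any care is the first step — the clean justification that length equals vertex count for a linear subdigraph — since everything downstream is immediate. If one wanted the argument to be fully self-contained rather than quoting \eqref{Eqn:fi=sum lsd}, the alternative (and slightly longer) route would be to expand each principal minor of order $i$ via the permutation/cycle-decomposition formula \eqref{Eqn:LSD Defn of Det} applied to the corresponding $i\times i$ principal submatrix, and then observe that summing over all size-$i$ principal submatrices exactly reassembles the sum over all linear subdigraphs on $i$ vertices; but since \eqref{Eqn:fi=sum lsd} is already available in the excerpt, the short route above suffices.
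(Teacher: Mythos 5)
Your proposal is correct and follows essentially the same route as the paper: combine $d_i = (-1)^i f_i$ with the combinatorial formula~\eqref{Eqn:fi=sum lsd} and cancel the signs. The only addition is your explicit justification that length equals vertex count for a linear subdigraph (so that $L_i$ is the index set of~\eqref{Eqn:fi=sum lsd}), a point the paper leaves implicit.
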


\begin{proof}
We compute:
\begin{align*}
d_i 
  &= (-1)^i f_i \\
  &= (-1)^i \sum_{\gamma} (-1)^{i - c(\gamma)} w(\gamma) 
     && \text{(by Equation~\eqref{Eqn:fi=sum lsd})}\\
  &= \sum_{\gamma} (-1)^{c(\gamma)} w(\gamma)\\
  &= \ell_i, 
     && \text{(by Equation~\eqref{Eqn:Defn of lr;sgn sum of weightd of lsd of length r}).}
\end{align*}
\end{proof}

\begin{remark}\label{Rem:dr=lr: Coeffiecients of Ch eqn is weightd sum of lsd}
The preceding discussion provides a clear combinatorial interpretation of the coefficients of the characteristic polynomial of $A$:  
each coefficient is a sign-weighted sum of the weights of all linear subdigraphs of the associated digraph $\mathcal{D}(A)$.
\end{remark}

A \emph{walk} $c$ in the directed graph $\mathcal{D}(A)$ from a vertex $u$ to a vertex $v$ is defined as a sequence of vertices $u=x_0, x_1, \cdots, x_{k-1}, x_k=v$, where each pair $(x_i, x_{i+1})$ represents an edge for $i=0, 1, 2, \cdots, k-1$. This walk is termed closed if $u$ is equal to $v$. The length $L(c)$ of the walk $c$ corresponds to the total number of edges included in it. The weight $w(c)$ of the walk is calculated as the product of the weights of all edges that are part of the walk. It is important to note that in the case of a closed walk, the starting and ending points are inherently defined. We will denote the total weight of all closed walks of length $r$ as $c_r$. For example, some closed walks derived from the graph $D(A)$ in Figure \ref{fig:D(A)} are described in Figures \ref{fig:walk-1},  \ref{fig:walks-2-self-loop}, \ref{fig:walks-2-2cycle}, and \ref{fig:walks-3}.
 
 \begin{lemma}\label{Lemma:(i,j)th entry of Ak is the sum of weights of all closed walk lengh k from i to j}[Theorem 3.1.2, \cite{Brualdi-Cvetkovic-combinatorial-matrix-thy}]
Let $A=[a_{ij}]$ be a matrix of order $n.$ For each positive integer $k,$ the entry $a_{ij}^{(k)}$ of $A^k$ in the $i$th row and $j$th column equals the sum of the weights of all walks in $\mathcal{D}(A)$ of length $k$ from vertex $i$ to vertex $j.$    
 \end{lemma}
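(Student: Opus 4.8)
The plan is to proceed by induction on the walk-length $k$, using nothing beyond the definition of matrix multiplication and the multiplicativity of the weight function. For the base case $k=1$, I would simply observe that $a_{ij}^{(1)} = a_{ij}$ is by definition the weight of the edge $(i,j)$ of $\mathcal{D}(A)$, which is exactly the sum of the weights of all walks of length $1$ from $i$ to $j$ --- there being a single such walk, the edge itself, with the usual convention that an absent edge contributes weight $0$.

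For the inductive step, I would assume the statement for some $k \ge 1$, so that $a_{im}^{(k)}$ is the sum of the weights of all walks of length $k$ from $i$ to $m$ for every $i,m \in [n]$, and then expand
\[
a_{ij}^{(k+1)} = \sum_{m=1}^{n} a_{im}^{(k)}\, a_{mj},
\]
coming from $A^{k+1} = A^{k} A$. The crux is the combinatorial observation that every walk of length $k+1$ from $i$ to $j$ decomposes uniquely as a walk of length $k$ from $i$ to its penultimate vertex $m$, followed by the edge $(m,j)$, and that under this decomposition the weight of the walk is the product of the weight of its length-$k$ initial segment with $a_{mj}$. Invoking the induction hypothesis and distributivity in the ring $\mathbb{K}$, the term $a_{im}^{(k)} a_{mj}$ therefore equals the sum of the weights of all length-$(k+1)$ walks from $i$ to $j$ whose penultimate vertex is $m$; summing over $m \in [n]$ counts each such walk exactly once and gives the claim.

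I do not anticipate a genuine obstacle here: the result is classical and the induction is routine. The only place demanding a little care is the inductive step, where one must check that grouping length-$(k+1)$ walks by their penultimate vertex is a bijection compatible with the multiplicative weight, so that the expansion of $\sum_m a_{im}^{(k)} a_{mj}$ reproduces the full weighted walk-sum with nothing omitted or double-counted; since $\mathbb{K}$ is only assumed to be a commutative ring, I would phrase the whole argument in terms of formal sums of weights rather than counting, which changes nothing. Specializing to $j = i$ and summing over $i \in [n]$ then recovers $\operatorname{Tr}(A^k) = c_k$, the form in which the lemma will actually be used in the sequel.
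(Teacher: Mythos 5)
Your induction is correct and is the standard argument for this classical fact; the paper itself does not prove this lemma but simply cites it as Theorem~3.1.2 of \cite{Brualdi-Cvetkovic-combinatorial-matrix-thy}, where the proof is exactly the induction on $k$ via $A^{k+1}=A^kA$ and grouping length-$(k+1)$ walks by their penultimate vertex that you describe. The only cosmetic point is that in this paper's convention $\mathcal{D}(A)$ carries an edge of weight $a_{ij}$ for \emph{every} ordered pair $(i,j)$, so no ``absent edge'' convention is needed in the base case.
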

\begin{lemma}\label{Lemma:Tr(Ak)=Ck}
For any $k\in \mathbb{N},$ the trace of the matrix $A^k$ is $\text{Tr}(A^k)=c_k.$    
\end{lemma}
\begin{proof}
By Lemma \ref{Lemma:(i,j)th entry of Ak is the sum of weights of all closed walk lengh k from i to j}, the entry $a_{ii}^{(k)}$ of $A^k$ in the $i$th row and $i$th column equals the sum of the weights of all closed walks in $\mathcal{D}(A)$ of length $k$ from the vertex $i$ to itself. Therefore, $\text{Tr}(A^k)$ is the sum of the weights of all closed walks of length $k.$ That is, $\text{Tr}(A^k)=c_k.$          
\end{proof}
\begin{remark}\label{Rem:Combinatorial inte of Trace}
According to Lemma \ref{Lemma:Tr(Ak)=Ck}, a combinatorial interpretation of trace of a matrix is the sum of the weights of closed walks. 
\end{remark}
\section{Trace Cayley-Hamilton theorem}
In this section, we will articulate and demonstrate the trace Cayley-Hamilton theorem. The theorem, as referenced in \cite{Darij-Grinberg-Trace-Cayley-Hamilton}
\begin{theorem}[Trace Cayley-Hamilton theorem]\label{Thm: Trace cayley Hamilton Thm}
Let $A$ be a $n\times n$ matrix over the commutative ring $\mathbb{K}.$ Let $p_A(\lambda)$ be the characteristic polynomial of $A$ defined as in Equation \eqref{Eqn:c-h-thm,coeef-as-d}. Then the trace Cayley-Hamilton theorem says the following:
\begin{enumerate}
    \item $\text{Tr}(A^r)+\text{Tr}(A^{r-1})d_1+\cdots+\text{Tr}(A^{r-(n-1)})d_{n-1}+\text{Tr}(A^{r-n})d_n=0, \text{ for }r>n$
    \item $\text{Tr}(A^r)+\text{Tr}(A^{r-1})d_1+\cdots+\text{Tr}(A^{r-(n-1)})d_{n-1}+rd_r=0, \text{ for } 1\leq r\leq n.$
\end{enumerate}
\end{theorem}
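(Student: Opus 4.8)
The plan is to convert the identity into a statement about closed walks and linear subdigraphs of $\mathcal{D}(A)$ and then prove it by a sign-reversing involution. By Lemma~\ref{Lemma:Tr(Ak)=Ck} we have $\operatorname{Tr}(A^k)=c_k$, and by Lemma~\ref{Lemma:Coeeffiient of Cha poly of A is sign sum of LSD} we have $d_i=\ell_i$. Adopting the conventions $d_0=\ell_0=1$ and $d_i=\ell_i=0$ for $i>n$ (there is no linear subdigraph on more than $n$ vertices), both parts of the theorem are instances of the single identity
\[
\sum_{i=0}^{\min(n,\,r-1)} \ell_i\,c_{r-i} \;+\; r\,\ell_r \;=\; 0 \qquad (r\ge 1),
\]
which becomes part~(1) when $r>n$ (then $r\ell_r=0$ and $\min(n,r-1)=n$) and part~(2) when $1\le r\le n$. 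So it suffices to prove this identity combinatorially.

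First I would introduce the signed set $\mathcal{S}_r$ of all ordered pairs $(c,\gamma)$ in which $c$ is a closed walk of length at least $1$ and $\gamma$ is a linear subdigraph of $\mathcal{D}(A)$, subject to $L(c)+L(\gamma)=r$; to each such pair attach the sign $(-1)^{c(\gamma)}$ and the weight $w(c)\,w(\gamma)$. Grouping by $i=L(\gamma)$, and using that $L(\gamma)\le n$ automatically while $L(c)=r-i\ge 1$ forces $i\le r-1$, the total signed weight of $\mathcal{S}_r$ is exactly $\sum_{i=0}^{\min(n,r-1)}\ell_i\,c_{r-i}$. The heart of the argument is an involution $\Phi$ on $\mathcal{S}_r$: writing $c=(v=x_0,x_1,\dots,x_j=v)$, let $k$ be the smallest index at which either (a)~$x_k$ lies on a cycle $C$ of $\gamma$, or (b)~$x_k=x_m$ for some $m<k$; such a $k$ exists because $x_j=x_0$, and one checks that exactly one of (a),(b) occurs there. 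In case~(a), splice a full traversal of $C$ into the walk at its visit to $x_k$ and delete $C$ from $\gamma$; in case~(b), the closed subwalk $x_m,\dots,x_k$ is a genuine cycle $C$ vertex-disjoint from all cycles of $\gamma$ (otherwise (a) would have fired earlier), so excise it from the walk and adjoin it to $\gamma$. Each operation preserves the weight and changes $c(\gamma)$ by exactly one, so $\Phi$ is sign-reversing, and the two operations are mutually inverse where defined.

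The only way $\Phi$ can leave $\mathcal{S}_r$ is in a case-(b) instance where the entire walk $c$ is a single cycle vertex-disjoint from $\gamma$, since excision would then leave a length-$0$ walk. These are precisely the fixed points of $\Phi$: pairs $(C,\gamma)$ with $C$ a cycle carrying a marked basepoint, $\gamma$ a linear subdigraph disjoint from $C$, and $L(C)+L(\gamma)=r$. (For $r>n$ there are none, since $C\cup\gamma$ would be a linear subdigraph on $r>n$ vertices; this already settles part~(1).) To evaluate their signed weight I would count them the other way: a linear subdigraph $\delta$ with $L(\delta)=r$ arises from such a fixed point by choosing one of its $c(\delta)$ cycles to be $C$ and one of the $L(C)$ vertices of that cycle as the basepoint, with $\gamma=\delta\setminus C$ and $c(\gamma)=c(\delta)-1$; since $\sum_{C\in\delta}L(C)=L(\delta)=r$, the total signed weight of the fixed points is $\sum_{\delta:\,L(\delta)=r} r\,(-1)^{c(\delta)-1}w(\delta)=-r\,\ell_r$. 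As the non-fixed points cancel in $\Phi$-pairs, the total signed weight of $\mathcal{S}_r$ equals $-r\ell_r$; equating this with the expression $\sum_{i=0}^{\min(n,r-1)}\ell_i c_{r-i}$ from the previous paragraph gives the displayed identity, and hence the theorem.

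I expect the main obstacle to be the careful verification that $\Phi$ is well defined and involutive: that at the first special index exactly one of (a),(b) holds; that after splicing $C$ into the walk (case~(a)) the newly created return to $x_k$ is recognized as the first special index of the image pair and triggers case~(b) on that same cycle $C$; and, symmetrically, that after excising $C$ (case~(b)) the reinserted cycle is caught by case~(a) at the correct index. The degenerate situations in which $C$ is a loop, and the claim that the single-cycle walk above is the sole obstruction to $\Phi$ being total, also require attention. Once these points are settled, the fixed-point count is a one-line computation.
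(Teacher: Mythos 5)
Your proposal is correct and follows essentially the same route as the paper: reduce via $\operatorname{Tr}(A^k)=c_k$ and $d_i=\ell_i$ to an identity about pairs $(c,\gamma)$, cancel the ``bad'' pairs by the same splice/excise sign-reversing involution, and account for the surviving pairs (your $\Phi$-fixed points, the paper's GOOD pairs) by the basepoint count that yields $-r\ell_r$. The only differences are presentational, namely your unified formulation of the two cases via the conventions $\ell_0=1$ and $\ell_i=0$ for $i>n$.
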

To establish Theorem \ref{Thm: Trace cayley Hamilton Thm}, we will initially prove the following theorem. 
\begin{theorem}\label{Thm:Combinatorial proof of Trace Cayley Hamilton thm}
Let $A$ be an $n\times n$ matrix and $\mathcal{D}(A)$ be the associate weighted digraph of $A.$ Then the following hold: 
\begin{enumerate}
	\item $c_r+c_{r-1}\ell_1+c_{r-2}\ell_2+\cdots+c_{r-n}\ell_n=0, r>n$
	\item $c_r+c_{r-1}\ell_1+c_{r-2}\ell_2+\cdots+r\ell_r=0, 1\leq r\leq n$.
\end{enumerate}    
\end{theorem}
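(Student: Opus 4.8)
The plan is to prove both identities at once by reading their left-hand sides as a single signed sum over a set of combinatorial objects and then constructing a fixed-point-free, weight-preserving, sign-reversing involution on that set.

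First I would fix $r\ge1$ and let $\mathcal{U}_r$ be the set of all pairs $(W,\gamma)$ in which $\gamma$ is a linear subdigraph of $\mathcal{D}(A)$ and $W$ is a closed walk, carrying its inherent basepoint $v_0$, with $L(W)+L(\gamma)=r$, subject to the one extra rule that when $L(W)=0$ --- so that $W$ is a single vertex --- that vertex must lie on $\gamma$. Assign to each pair the signed weight $(-1)^{c(\gamma)}w(W)w(\gamma)$, using the conventions $\ell_0=1$ (the empty subdigraph) and $w(W)=1$ for a length-$0$ walk. Splitting according to $m:=L(W)$: since a closed walk of length $m$ and a linear subdigraph of length $r-m$ may be chosen independently, the pairs with $m\ge1$ contribute $\sum_{m=1}^{r}c_m\ell_{r-m}$, whereas the pairs with $m=0$ contribute $\sum_{\gamma\in L_r}|V(\gamma)|\,(-1)^{c(\gamma)}w(\gamma)=r\,\ell_r$, because a linear subdigraph of length $r$ has exactly $r$ vertices. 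Hence the total signed weight of $\mathcal{U}_r$ is $c_r+c_{r-1}\ell_1+\cdots+c_1\ell_{r-1}+r\,\ell_r$. When $r>n$ we have $\ell_j=0$ for every $j>n$, so this collapses to $c_r+c_{r-1}\ell_1+\cdots+c_{r-n}\ell_n$, with the term $r\,\ell_r$ vanishing; when $1\le r\le n$ it is exactly the left-hand side of part (2). So it is enough to show that the signed weight of $\mathcal{U}_r$ is zero.

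Next I would define an involution $\Phi$ on $\mathcal{U}_r$. Given $(W,\gamma)$ with $W\colon v_0\to v_1\to\cdots\to v_m=v_0$, walk along $W$ to the first \emph{blocked} index $k$: the least $k$ such that either $v_k$ is a vertex of $\gamma$, or $v_k=v_i$ for some $i<k$. Such a $k$ always exists --- if $m\ge1$ then $v_m=v_0$ is a repeat, and if $m=0$ then $v_0$ lies on $\gamma$ by the rule --- and the two alternatives are mutually exclusive, since a repeat $v_k=v_i$ that also lay on $\gamma$ would block the earlier index $i$. If $v_k$ lies on a cycle $C$ of $\gamma$, delete $C$ from $\gamma$ and splice it into $W$ at $v_k$, running once around $C$ starting and ending at $v_k$; this lowers $c(\gamma)$ by one. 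If instead $v_k$ is a repeat, first occurring at position $i<k$, then $v_i\to v_{i+1}\to\cdots\to v_{k-1}\to v_i$ is an honest cycle --- since $k$ is the first blocked index, $v_0,\dots,v_{k-1}$ are pairwise distinct and none of them lies on $\gamma$ --- so remove this cycle from $W$ and adjoin it to $\gamma$, raising $c(\gamma)$ by one. Either move keeps $L(W)+L(\gamma)$ and the product $w(W)w(\gamma)$ unchanged while flipping the sign $(-1)^{c(\gamma)}$, and one checks the output again lies in $\mathcal{U}_r$.

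Finally I would verify that $\Phi$ is an involution with no fixed points; then the signed weights cancel in pairs, $\sum_{(W,\gamma)\in\mathcal{U}_r}(-1)^{c(\gamma)}w(W)w(\gamma)=0$, and the reduction above yields both parts of the theorem. The step I expect to be the main obstacle is exactly this verification: one must check that splicing and peeling do not move the position of the first blocked vertex, so that the two branches really do invert one another --- after splicing $C$ in at $v_k$ the vertex $v_k$ becomes the first repeated vertex of the new walk, so $\Phi$ peels $C$ straight back, and after peeling a cycle out at $v_i$ that vertex becomes the first vertex of the new walk lying on the enlarged $\gamma$, so $\Phi$ splices the cycle straight back --- and one must handle the degenerate branch $L(W)=0$ with care, since that is precisely the source of the anomalous last term $r\,\ell_r$ in part (2). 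By comparison, the bookkeeping in the first step is routine.
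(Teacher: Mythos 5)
Your proof is correct and uses essentially the same sign-reversing involution as the paper: splice the first-encountered cycle of $\gamma$ into the closed walk, or peel the first completed cycle of the walk off into $\gamma$. The only difference is organizational --- you absorb the $r\ell_r$ term into the object set as length-zero based walks and so treat both cases uniformly, whereas the paper handles $1\le r\le n$ by cancelling the ``good'' pairs (simple cycles disjoint from $\gamma$) against $r\ell_r$ via a separate counting argument.
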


\begin{example}[Complete Analysis for $n=2$]

Consider the $2 \times 2$ matrix:
\[
A = \begin{pmatrix}
a & b \\
c & d
\end{pmatrix}
\]
The corresponding weighted digraph $\mathcal{D}(A)$ is represented by Figure \ref{fig:D(A)}.
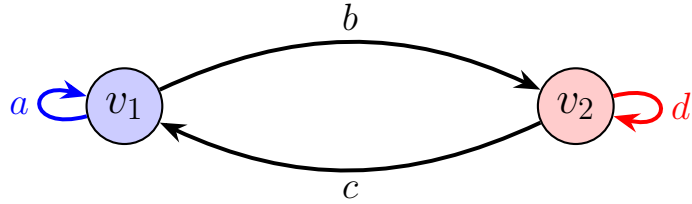
\begin{figure}[H]
    \centering

\begin{tikzpicture}[>=Stealth, thick, scale=2]
    \node[circle, draw=black, fill=blue!20, minimum size=1cm, font=\Large] (v1) at (0, 0) {$v_1$};
    \node[circle, draw=black, fill=red!20, minimum size=1cm, font=\Large] (v2) at (3, 0) {$v_2$};
    
    \draw[->, line width=1.5pt, blue] (v1) to[loop left, looseness=8] node[left, font=\large] {$a$} (v1);
    \draw[->, line width=1.5pt, red] (v2) to[loop right, looseness=8] node[right, font=\large] {$d$} (v2);
    
    \draw[->, line width=1.5pt, black] (v1) to[bend left=25] node[above, font=\large] {$b$} (v2);
    \draw[->, line width=1.5pt, black] (v2) to[bend left=25] node[below, font=\large] {$c$} (v1);
\end{tikzpicture}
\caption{The weighted directed graph $D(A)$ associated with the matrix $A.$}
    \label{fig:D(A)}
\end{figure}
A linear subdigraph with 1 vertex consists of a single cycle (self-loop). Figure \ref{fig:lsd-1} shows all such subdigraphs.

\begin{figure}[H]
    \centering
\begin{tikzpicture}[>=Stealth, thick, scale=1.5]
    \begin{scope}[xshift=0cm]
        \node[circle, draw=black, fill=blue!20, minimum size=0.8cm] (v1) at (0, 0) {$v_1$};
         \draw[->, line width=1.5pt, blue] (v1) to[loop left, looseness=6] node[left, font=\large] {$a$} (v1);
        \node[below] at (0, -0.8) {Weight: $a$};
        \node[below] at (0, -1.2) {Sign: $(-1)^1 = -1$};
    \end{scope}
    
    \begin{scope}[xshift=3.5cm]
        \node[circle, draw=black, fill=red!20, minimum size=0.8cm] (v2) at (0, 0) {$v_2$};
        \draw[->, line width=1.5pt, red] (v2) to[loop right, looseness=6] node[right, font=\large] {$d$} (v2);
        \node[below] at (0, -0.8) {Weight: $d$};
        \node[below] at (0, -1.2) {Sign: $(-1)^1 = -1$};
    \end{scope}
\end{tikzpicture}
\caption{Linear subdigraphs of length $1.$}
    \label{fig:lsd-1}
\end{figure}

Therefore, 
$\ell_1 = (-1)^1(a + d) = -(a + d).$ 

A linear subdigraph with 2 vertices can be either two disjoint 1-cycles or one 2-cycle. Figure \ref{fig:lsd-2} shows both types.
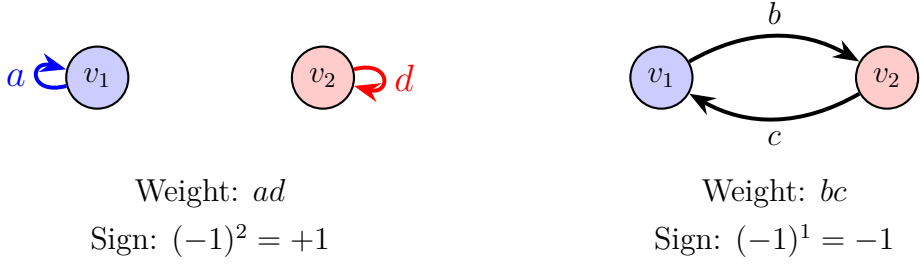
\begin{figure}[H]
    \centering
    \begin{tikzpicture}[>=Stealth, thick, scale=1.5]
    \begin{scope}[xshift=0cm]
        \node at (1, 1.5) {\textbf{Type 1:} Two self-loops};
        \node[circle, draw=black, fill=blue!20, minimum size=0.7cm] (v1) at (0, 0) {$v_1$};
        \node[circle, draw=black, fill=red!20, minimum size=0.7cm] (v2) at (2, 0) {$v_2$};
        \draw[->, line width=1.5pt, blue] (v1) to[loop left, looseness=6] node[left, font=\large] {$a$} (v1);
    
        \draw[->, line width=1.5pt, red] (v2) to[loop right, looseness=6] node[right, font=\large] {$d$} (v2);
    
        \node[below] at (1, -0.8) {Weight: $ad$};
        \node[below] at (1, -1.2) {Sign: $(-1)^2 = +1$};
    \end{scope}
    
    \begin{scope}[xshift=5cm]
        \node at (1, 1.5) {\textbf{Type 2:} One 2-cycle};
        \node[circle, draw=black, fill=blue!20, minimum size=0.7cm] (v1) at (0, 0) {$v_1$};
        \node[circle, draw=black, fill=red!20, minimum size=0.7cm] (v2) at (2, 0) {$v_2$};
        \draw[->, line width=1.5pt, black] (v1) to[bend left=30] node[above] {$b$} (v2);
        \draw[->, line width=1.5pt, black] (v2) to[bend left=30] node[below] {$c$} (v1);
        \node[below] at (1, -0.8) {Weight: $bc$};
        \node[below] at (1, -1.2) {Sign: $(-1)^1 = -1$};
    \end{scope}
\end{tikzpicture}
\caption{Linear subdigraphs of length $2.$}
    \label{fig:lsd-2}
\end{figure}
Note that, $\ell_2 = (+1)(ad) + (-1)(bc) = ad - bc.$ 

A closed walk of length 1 is simply a self-loop. Figure \ref{fig:walk-1} shows all such walks.

\begin{figure}[H]
    \centering

\begin{tikzpicture}[>=Stealth, thick, scale=1.5]
    \begin{scope}[xshift=0cm]
        \node[circle, draw=black, fill=blue!20, minimum size=0.8cm] (v1) at (0, 0) {$v_1$};
         \draw[->, line width=1.5pt, blue] (v1) to[loop left, looseness=6] node[left, font=\large] {$a$} (v1);
    \node[below] at (0, -0.7) {Weight: $a$};
    \end{scope}
    
    \begin{scope}[xshift=2.5cm]
        \node[circle, draw=black, fill=red!20, minimum size=0.8cm] (v2) at (0, 0) {$v_2$};
                \draw[->, line width=1.5pt, red] (v2) to[loop right, looseness=6] node[right, font=\large] {$d$} (v2);
    
        \node[below] at (0, -0.7) {Weight: $d$};
    \end{scope}
\end{tikzpicture}
\caption{Closed walks of length $1.$}
    \label{fig:walk-1}
\end{figure}
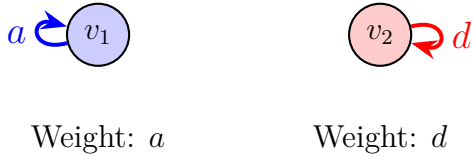
So, $c_1 = a + d = \text{tr}(A).$ 

A closed walk of length 2 can be formed by:
\begin{itemize}
    \item Using a self-loop twice (Figure \ref{fig:walks-2-self-loop})
    \item Going to another vertex and returning (Figure \ref{fig:walks-2-2cycle}).
\end{itemize}

\begin{figure}[H]
    \centering

\begin{tikzpicture}[>=Stealth, thick, scale=1.3]
    \begin{scope}[xshift=0cm]
    \node at (-.5, 1.2) {\small Path: $v_1 \xrightarrow{a} v_1 \xrightarrow{a} v_1$};
    \node at (3, 1.2) {\small Path: $v_2 \xrightarrow{d} v_2 \xrightarrow{d} v_2$};
        \node[circle, draw=black, fill=blue!20, minimum size=0.7cm] (v1) at (0, 0) {$v_1$};
         \draw[->, line width=1.5pt, blue] (v1) to[loop left, looseness=6] node[left, font=\large] {$a$} (v1);
    
        \node[below] at (0, -0.7) {Weight: $a^2$};
    \end{scope}
    
    \begin{scope}[xshift=2.5cm]
        \node[circle, draw=black, fill=red!20, minimum size=0.7cm] (v2) at (0, 0) {$v_2$};
        \draw[->, line width=1.5pt, red] (v2) to[loop right, looseness=6] node[right, font=\large] {$d$} (v2);
    
        \node[below] at (0, -0.7) {Weight: $d^2$};
    \end{scope}
\end{tikzpicture}
\caption{Closed walks of length $1$ formed by self-loop twice.}
    \label{fig:walks-2-self-loop}
\end{figure}
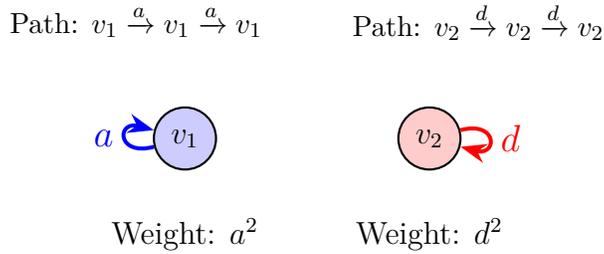

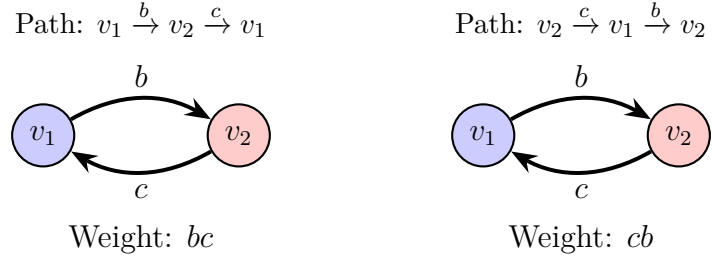
\begin{figure}[H]
    \centering

\begin{tikzpicture}[>=Stealth, thick, scale=1.3]
    \begin{scope}[xshift=0cm]
        \node at (1, 1.2) {\small Path: $v_1 \xrightarrow{b} v_2 \xrightarrow{c} v_1$};
        \node[circle, draw=black, fill=blue!20, minimum size=0.7cm] (v1) at (0, 0) {$v_1$};
        \node[circle, draw=black, fill=red!20, minimum size=0.7cm] (v2) at (2, 0) {$v_2$};
        \draw[->, line width=1.5pt] (v1) to[bend left=30] node[above] {$b$} (v2);
        \draw[->, line width=1.5pt] (v2) to[bend left=30] node[below] {$c$} (v1);
        \node[below] at (1, -0.8) {Weight: $bc$};
    \end{scope}
    
    \begin{scope}[xshift=4.5cm]
        \node at (1, 1.2) {\small Path: $v_2 \xrightarrow{c} v_1 \xrightarrow{b} v_2$};
        \node[circle, draw=black, fill=blue!20, minimum size=0.7cm] (v1) at (0, 0) {$v_1$};
        \node[circle, draw=black, fill=red!20, minimum size=0.7cm] (v2) at (2, 0) {$v_2$};
        \draw[->, line width=1.5pt] (v2) to[bend left=30] node[below] {$c$} (v1);
        \draw[->, line width=1.5pt] (v1) to[bend left=30] node[above] {$b$} (v2);
        \node[below] at (1, -0.8) {Weight: $cb$};
    \end{scope}
\end{tikzpicture}
\caption{Closed walks of length $2$ formed by two length cycle.}
    \label{fig:walks-2-2cycle}
\end{figure}
Therefore, $c_2 = a^2 + d^2 + bc + cb = a^2 + d^2 + 2bc = \text{tr}(A^2).$

Note that for $r = 1,$ 
\begin{align*}
c_1 + \ell_1 &= (a + d) + (-(a + d)) \\
&= a + d - a - d \\
&= 0.
\end{align*}
Now, for $r = 2,$ we will show the the identity $c_2 + c_1\ell_1 + 2\ell_2 = 0.$
\begin{align*}
c_2 + c_1\ell_1 + 2\ell_2 &= (a^2 + d^2 + 2bc) + (a+d)(-(a+d)) + 2(ad - bc) \\
&= a^2 + d^2 + 2bc - (a^2 + 2ad + d^2) + 2ad - 2bc \\
&= a^2 + d^2 + 2bc - a^2 - 2ad - d^2 + 2ad - 2bc \\
&= 0 .
\end{align*}

Now, we consider that $r=3>n=2.$ Here we will check that $c_3 + c_2\ell_1 + c_1\ell_2 = 0.$ For that first, we have to compute $c_3.$ There are 8 distinct closed walks of length 3, shown in Figure \ref{fig:walks-3}:
\begin{itemize}
    \item 2 walks using only self-loops (Type 1)
    \item 3 walks using self-loop at $v_1$ plus the 2-cycle (Type 2)
    \item 3 walks using self-loop at $v_2$ plus the 2-cycle (Type 3).
\end{itemize}
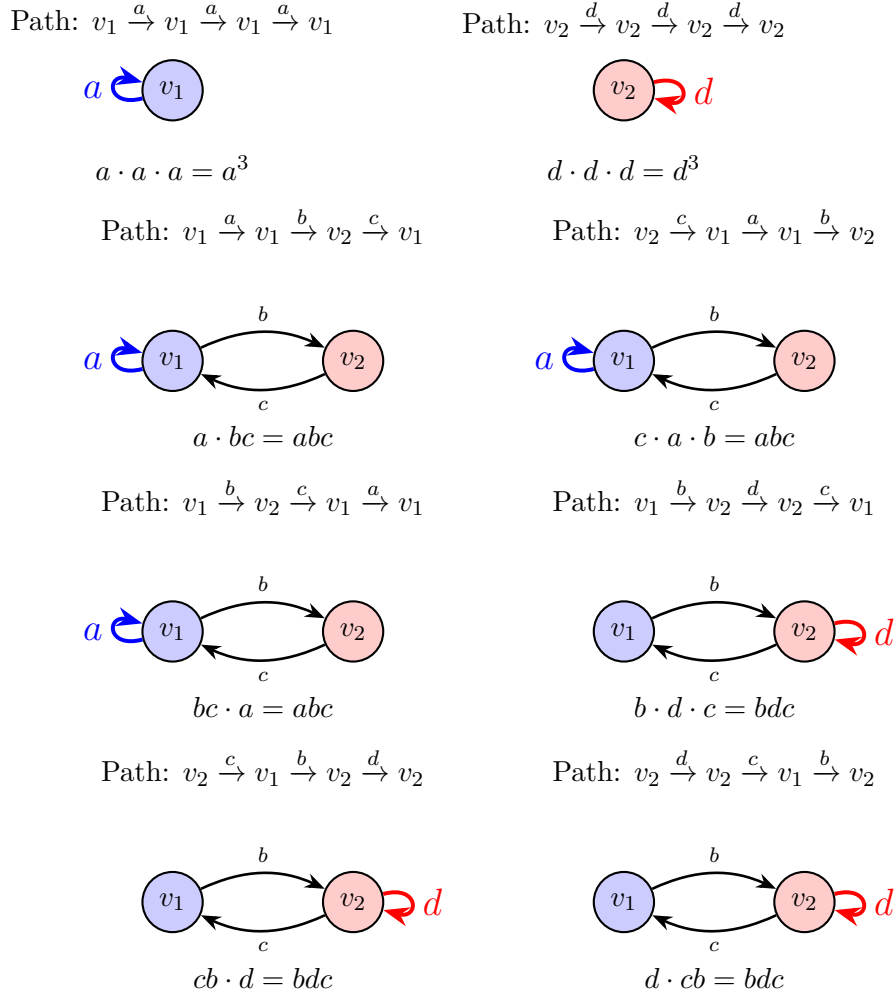
\begin{figure}
    \centering
   
\begin{tikzpicture}[>=Stealth, thick, scale=1.2]
    \begin{scope}[xshift=0cm]
        \node at (0, .8) {\small Path: $v_1 \xrightarrow{a} v_1 \xrightarrow{a} v_1\xrightarrow{a} v_1$};
        \node[circle, draw=black, fill=blue!20, minimum size=0.6cm, font=\small] (v1) at (0, 0) {$v_1$};
        \draw[->, line width=1.5pt, blue] (v1) to[loop left, looseness=6] node[left, font=\large] {$a$} (v1);
    
        \node[below] at (0, -0.6) {\small $a\cdot a\cdot a=a^3$};
    \end{scope}
    
    \begin{scope}[xshift=5cm]
        \node at (0, .8) {\small Path: $v_2 \xrightarrow{d} v_2 \xrightarrow{d} v_2\xrightarrow{d} v_2$};
        \node[circle, draw=black, fill=red!20, minimum size=0.6cm, font=\small] (v2) at (0, 0) {$v_2$};
       \draw[->, line width=1.5pt, red] (v2) to[loop right, looseness=6] node[right, font=\large] {$d$} (v2);
    
        \node[below] at (0, -0.6) {\small $d\cdot d\cdot d=d^3$};
    \end{scope}

    \begin{scope}[xshift=0cm, yshift=-3cm]
        \node at (1, 1.5) {\small Path: $v_1 \xrightarrow{a} v_1 \xrightarrow{b} v_2 \xrightarrow{c} v_1$};
        \node[circle, draw=black, fill=blue!20, minimum size=0.6cm, font=\small] (v1) at (0, 0) {$v_1$};
        \node[circle, draw=black, fill=red!20, minimum size=0.6cm, font=\small] (v2) at (2, 0) {$v_2$};
       \draw[->, line width=1.5pt, blue] (v1) to[loop left, looseness=6] node[left, font=\large] {$a$} (v1);
    
        \draw[->, line width=1pt] (v1) to[bend left=25] node[above, font=\tiny] {$b$} (v2);
        \draw[->, line width=1pt] (v2) to[bend left=25] node[below, font=\tiny] {$c$} (v1);
        \node[below] at (1, -0.6) {\small $a \cdot bc=abc$};
    \end{scope}
    
    \begin{scope}[xshift=5cm, yshift=-6cm]
        \node at (1, 1.5) {\small Path: $v_1 \xrightarrow{b} v_2 \xrightarrow{d} v_2 \xrightarrow{c} v_1$};
        \node[circle, draw=black, fill=blue!20, minimum size=0.6cm, font=\small] (v1) at (0, 0) {$v_1$};
        \node[circle, draw=black, fill=red!20, minimum size=0.6cm, font=\small] (v2) at (2, 0) {$v_2$};
        \draw[->, line width=1.5pt, red] (v2) to[loop right, looseness=6] node[right, font=\large] {$d$} (v2);
    
        \draw[->, line width=1pt] (v1) to[bend left=25] node[above, font=\tiny] {$b$} (v2);
        \draw[->, line width=1pt] (v2) to[bend left=25] node[below, font=\tiny] {$c$} (v1);
        \node[below] at (1, -0.6) {\small $b \cdot d\cdot c=bdc$};
    \end{scope}

    \begin{scope}[yshift=-6cm]
        \node at (1, 1.5) {\small Path: $v_1 \xrightarrow{b} v_2 \xrightarrow{c} v_1 \xrightarrow{a} v_1$};
        \node[circle, draw=black, fill=blue!20, minimum size=0.6cm, font=\small] (v1) at (0, 0) {$v_1$};
        \node[circle, draw=black, fill=red!20, minimum size=0.6cm, font=\small] (v2) at (2, 0) {$v_2$};
        \draw[->, line width=1.5pt, blue] (v1) to[loop left, looseness=6] node[left, font=\large] {$a$} (v1);
    
        \draw[->, line width=1pt] (v1) to[bend left=25] node[above, font=\tiny] {$b$} (v2);
        \draw[->, line width=1pt] (v2) to[bend left=25] node[below, font=\tiny] {$c$} (v1);
        \node[below] at (1, -0.6) {\small $bc \cdot a=abc$};
    \end{scope}
    
 \begin{scope}[xshift=5cm, yshift=-3cm]
        \node at (1, 1.5) {\small Path: $v_2 \xrightarrow{c} v_1 \xrightarrow{a} v_1 \xrightarrow{b} v_2$};
        \node[circle, draw=black, fill=blue!20, minimum size=0.6cm, font=\small] (v1) at (0, 0) {$v_1$};
        \node[circle, draw=black, fill=red!20, minimum size=0.6cm, font=\small] (v2) at (2, 0) {$v_2$};
    
       \draw[->, line width=1.5pt, blue] (v1) to[loop left, looseness=6] node[left, font=\large] {$a$} (v1);
    
        \draw[->, line width=1pt] (v1) to[bend left=25] node[above, font=\tiny] {$b$} (v2);
        \draw[->, line width=1pt] (v2) to[bend left=25] node[below, font=\tiny] {$c$} (v1);
        \node[below] at (1, -0.6) {\small $c\cdot a \cdot b=abc$};
    \end{scope}   

    \begin{scope}[xshift=0cm, yshift=-9cm]
        \node at (1, 1.5) {\small Path: $v_2 \xrightarrow{c} v_1 \xrightarrow{b} v_2 \xrightarrow{d} v_2$};
        \node[circle, draw=black, fill=blue!20, minimum size=0.6cm, font=\small] (v1) at (0, 0) {$v_1$};
        \node[circle, draw=black, fill=red!20, minimum size=0.6cm, font=\small] (v2) at (2, 0) {$v_2$};
        \draw[->, line width=1.5pt, red] (v2) to[loop right, looseness=6] node[right, font=\large] {$d$} (v2);
    
        \draw[->, line width=1pt] (v1) to[bend left=25] node[above, font=\tiny] {$b$} (v2);
        \draw[->, line width=1pt] (v2) to[bend left=25] node[below, font=\tiny] {$c$} (v1);
        \node[below] at (1, -0.6) {\small $cb \cdot d=bdc$};
    \end{scope}  

    \begin{scope}[xshift=5cm, yshift=-9cm]
        \node at (1, 1.5) {\small Path: $v_2 \xrightarrow{d} v_2 \xrightarrow{c} v_1 \xrightarrow{b} v_2$};
        \node[circle, draw=black, fill=blue!20, minimum size=0.6cm, font=\small] (v1) at (0, 0) {$v_1$};
        \node[circle, draw=black, fill=red!20, minimum size=0.6cm, font=\small] (v2) at (2, 0) {$v_2$};
        \draw[->, line width=1.5pt, red] (v2) to[loop right, looseness=6] node[right, font=\large] {$d$} (v2);
        \draw[->, line width=1pt] (v1) to[bend left=25] node[above, font=\tiny] {$b$} (v2);
        \draw[->, line width=1pt] (v2) to[bend left=25] node[below, font=\tiny] {$c$} (v1);
        \node[below] at (1, -0.6) {\small $d \cdot cb=bdc$};
    \end{scope}  
\end{tikzpicture}
    \caption{All closed walks of length $3.$}
    \label{fig:walks-3}
\end{figure}
Therefore, 
$c_3 = a^3 + d^3 + 3abc + 3dbc = a^3 + d^3 + 3bc(a + d).$
Now verify the identity:
\begin{align*}
c_3 + c_2\ell_1 + c_1\ell_2 &= [a^3 + d^3 + 3bc(a+d)] + [a^2 + d^2 + 2bc][-(a+d)] \\
&\quad + [a+d][ad - bc] \\
&= a^3 + d^3 + 3abc + 3dbc \\
&\quad - a^3 - a^2d - ad^2 - d^3 - 2abc - 2dbc \\
&\quad + a^2d + ad^2 - abc - dbc \\
&= a^3 + d^3 + 3abc + 3dbc - a^3 - d^3 - 2abc - 2dbc - abc - dbc \\
&= 0. 
\end{align*}
\end{example}

\section{Proof of the Trace Cayley-Hamilton theorem}
This section is devoted to the proof of our main theorem. We begin by proving Theorem~\ref{Thm:Combinatorial proof of Trace Cayley Hamilton thm}, which serves as a key intermediate result. 
\begin{proof}[Proof of Theorem \ref{Thm:Combinatorial proof of Trace Cayley Hamilton thm}]
Initially, we address the scenario where $r>n$. To establish this, we examine all ordered pairs $(c, \gamma)$, where $c$ represents a closed walk and $\gamma$ is a linear subdigraph (which may be empty), such that $L(c)+L(\gamma)=r$. Define the weight $W$ of $(c, \gamma)$ to be $W((c, \gamma))=(-1)^{c(\gamma)}w(c)w(\gamma)$. It is important to observe that the left-hand side of $(1)$ is precisely equal to $\sum\limits_{(c, \gamma)}W((c, \gamma))$, where the summation runs over all ordered pairs $(c, \gamma)$ as previously described.

The key observation is that, given $r>n$, either $c$ and $\gamma$ share a common vertex, or $c$ does not represent a ``simple'' closed walk (where ``simple'' indicates that the structure of the closed walk is a directed cycle). Consider a specific pair $(c, \gamma)$ that meets these criteria. Let $x$ denote both the starting and ending vertex of $c$. As we traverse from $x$ along $c$, there are two scenarios to consider: either we first encounter a vertex $y$ that is part of $\gamma$, or we complete a closed directed cycle $\acute{c}$, which is a subwalk of $c$, without encountering any vertex of $\gamma$ during our journey from $x$ to the completion of $\acute{c}$.

\subsection*{Scenario 1: Encountering a vertex $y \in \gamma:$}

In the first scenario, we create a new ordered pair $(\tilde{c}, \tilde{\gamma})$, where $\tilde{c}=\widehat{xy}|_{c}\bigodot \gamma_{y}\bigodot \widehat{yx}|_{c}$ and $\tilde{\gamma}=\gamma\setminus\{\gamma_y\}$. Here, $\widehat{xy}|_{c}$ represents the walk from $x$ to $y$ along $c$, and $\gamma_y$ is the directed cycle of $\gamma$ that includes vertex $y$. It is important to note that $W((\tilde{c}, \tilde{\gamma}))=-W((c, \gamma))$.

\subsection*{Scenario 2: Completing a simple cycle $\acute{c}$ without encountering $\gamma:$}

In the second scenario, we form a new ordered pair $(\tilde{\tilde{c}}, \tilde{\tilde{\gamma}})$, where $\tilde{\tilde{c}}$ is created by removing the directed cycle $\acute{c}$ from $c$, and $\tilde{\tilde{\gamma}}$ is defined as $\gamma\cup \acute{c}$. This demonstrates that the process is indeed an involution and is sign-reversing, as previously observed. It is also noteworthy that $W((\tilde{\tilde{c}}, \tilde{\tilde{\gamma}}))=-W((c, \gamma))$. This concludes the proof of the case $r>n$.


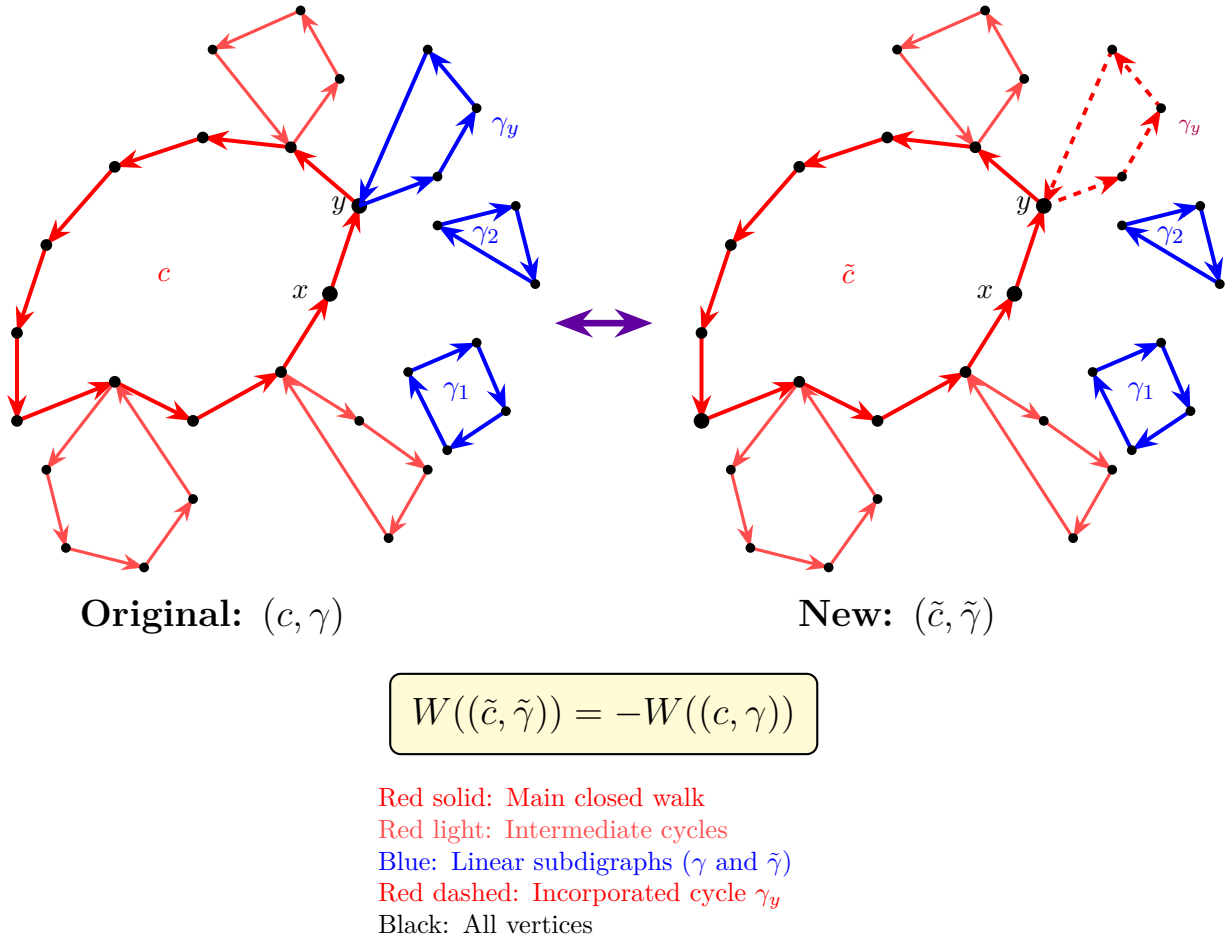
\begin{figure}[H]
\centering
\begin{tikzpicture}[>=Stealth, thick, scale=1.3]
    
    \begin{scope}[xshift=0cm]
        \node[font=\large\bfseries] at (2, -2) {Original: $(c, \gamma)$};
        
        \draw[red, line width=1.5pt, ->] (0,0) -- (1,0.4);
        \draw[red, line width=1.5pt, ->] (1,0.4) -- (1.8,0);
        \draw[red, line width=1.5pt, ->] (1.8,0) -- (2.7,0.5);
        \draw[red, line width=1.5pt, ->] (2.7,0.5) -- (3.2,1.3);
        \draw[red, line width=1.5pt, ->] (3.2,1.3) -- (3.5,2.2);
        \draw[red, line width=1.5pt, ->] (3.5,2.2) -- (2.8,2.8);
        \draw[red, line width=1.5pt, ->] (2.8,2.8) -- (1.9,2.9);
        \draw[red, line width=1.5pt, ->] (1.9,2.9) -- (1,2.6);
        \draw[red, line width=1.5pt, ->] (1,2.6) -- (0.3,1.8);
        \draw[red, line width=1.5pt, ->] (0.3,1.8) -- (0,0.9);
        \draw[red, line width=1.5pt, ->] (0,0.9) -- (0,0);
        
        \draw[red!70, line width=1.2pt, ->] (1,0.4) -- (0.3,-0.5);
        \draw[red!70, line width=1.2pt, ->] (0.3,-0.5) -- (0.5,-1.3);
        \draw[red!70, line width=1.2pt, ->] (0.5,-1.3) -- (1.3,-1.5);
        \draw[red!70, line width=1.2pt, ->] (1.3,-1.5) -- (1.8,-0.8);
        \draw[red!70, line width=1.2pt, ->] (1.8,-0.8) -- (1,0.4);
        
        \draw[red!70, line width=1.2pt, ->] (2.7,0.5) -- (3.5,0);
        \draw[red!70, line width=1.2pt, ->] (3.5,0) -- (4.2,-0.5);
        \draw[red!70, line width=1.2pt, ->] (4.2,-0.5) -- (3.8,-1.2);
        \draw[red!70, line width=1.2pt, ->] (3.8,-1.2) -- (2.7,0.5);
        
        \draw[red!70, line width=1.2pt, ->] (2.8,2.8) -- (3.3,3.5);
        \draw[red!70, line width=1.2pt, ->] (3.3,3.5) -- (2.9,4.2);
        \draw[red!70, line width=1.2pt, ->] (2.9,4.2) -- (2,3.8);
        \draw[red!70, line width=1.2pt, ->] (2,3.8) -- (2.8,2.8);
        
        \fill[black] (0,0) circle (0.06);
        \node[below left, black, font=\small] at (3.1,1.5) {$x$};
        
        \fill[black] (1,0.4) circle (0.06);
        \fill[black] (1.8,0) circle (0.06);
        \fill[black] (2.7,0.5) circle (0.06);
        \fill[black] (3.2,1.3) circle (0.08);
        
        \fill[black] (3.5,2.2) circle (0.08);
        \node[right, black, font=\small] at (3.1,2.2) {$y$};
        
        \fill[black] (2.8,2.8) circle (0.06);
        \fill[black] (1.9,2.9) circle (0.06);
        \fill[black] (1,2.6) circle (0.06);
        \fill[black] (0.3,1.8) circle (0.06);
        \fill[black] (0,0.9) circle (0.06);
        
        \fill[black] (0.3,-0.5) circle (0.05);
        \fill[black] (0.5,-1.3) circle (0.05);
        \fill[black] (1.3,-1.5) circle (0.05);
        \fill[black] (1.8,-0.8) circle (0.05);
        
        \fill[black] (3.5,0) circle (0.05);
        \fill[black] (4.2,-0.5) circle (0.05);
        \fill[black] (3.8,-1.2) circle (0.05);
        
        \fill[black] (3.3,3.5) circle (0.05);
        \fill[black] (2.9,4.2) circle (0.05);
        \fill[black] (2,3.8) circle (0.05);
        
        \draw[blue, line width=1.5pt, ->] (3.5,2.2) -- (4.3,2.5);
        \draw[blue, line width=1.5pt, ->] (4.3,2.5) -- (4.7,3.2);
        \draw[blue, line width=1.5pt, ->] (4.7,3.2) -- (4.2,3.8);
        \draw[blue, line width=1.5pt, ->] (4.2,3.8) -- (3.5,2.2);
        \node[blue, font=\small] at (5, 3) {$\gamma_y$};
        
        \fill[black] (4.3,2.5) circle (0.05);
        \fill[black] (4.7,3.2) circle (0.05);
        \fill[black] (4.2,3.8) circle (0.05);
        
        \draw[blue, line width=1.5pt, ->] (4, 0.5) -- (4.7, 0.8);
        \draw[blue, line width=1.5pt, ->] (4.7, 0.8) -- (5, 0.1);
        \draw[blue, line width=1.5pt, ->] (5, 0.1) -- (4.4, -0.3);
        \draw[blue, line width=1.5pt, ->] (4.4, -0.3) -- (4, 0.5);
        \node[blue, font=\small] at (4.5, 0.3) {$\gamma_1$};
        
        \fill[black] (4, 0.5) circle (0.05);
        \fill[black] (4.7, 0.8) circle (0.05);
        \fill[black] (5, 0.1) circle (0.05);
        \fill[black] (4.4, -0.3) circle (0.05);
        
        \draw[blue, line width=1.5pt, ->] (4.3, 2) -- (5.1, 2.2);
        \draw[blue, line width=1.5pt, ->] (5.1, 2.2) -- (5.3, 1.4);
        \draw[blue, line width=1.5pt, ->] (5.3, 1.4) -- (4.3, 2);
        \node[blue, font=\small] at (4.8, 1.9) {$\gamma_2$};
        
        \fill[black] (4.3, 2) circle (0.05);
        \fill[black] (5.1, 2.2) circle (0.05);
        \fill[black] (5.3, 1.4) circle (0.05);
        
        \node[red, font=\small] at (1.5, 1.5) {$c$};
    \end{scope}
    
    \draw[<->, line width=2.5pt, blue!50!purple] (5.5,1) -- (6.5, 1);
    
    \begin{scope}[xshift=7cm]
        \node[font=\large\bfseries] at (2, -2) {New: $(\tilde{c}, \tilde{\gamma})$};
        
        \draw[red, line width=1.5pt, ->] (0,0) -- (1,0.4);
        \draw[red, line width=1.5pt, ->] (1,0.4) -- (1.8,0);
        \draw[red, line width=1.5pt, ->] (1.8,0) -- (2.7,0.5);
        \draw[red, line width=1.5pt, ->] (2.7,0.5) -- (3.2,1.3);
        \draw[red, line width=1.5pt, ->] (3.2,1.3) -- (3.5,2.2);
        
        \draw[red, line width=1.5pt, ->, dashed] (3.5,2.2) -- (4.3,2.5);
        \draw[red, line width=1.5pt, ->, dashed] (4.3,2.5) -- (4.7,3.2);
        \draw[red, line width=1.5pt, ->, dashed] (4.7,3.2) -- (4.2,3.8);
        \draw[red, line width=1.5pt, ->, dashed] (4.2,3.8) -- (3.5,2.2);
        \node[red!70!blue, font=\tiny] at (5, 3) {$\gamma_y$};
        
        \draw[red, line width=1.5pt, ->] (3.5,2.2) -- (2.8,2.8);
        \draw[red, line width=1.5pt, ->] (2.8,2.8) -- (1.9,2.9);
        \draw[red, line width=1.5pt, ->] (1.9,2.9) -- (1,2.6);
        \draw[red, line width=1.5pt, ->] (1,2.6) -- (0.3,1.8);
        \draw[red, line width=1.5pt, ->] (0.3,1.8) -- (0,0.9);
        \draw[red, line width=1.5pt, ->] (0,0.9) -- (0,0);
        
        \draw[red!70, line width=1.2pt, ->] (1,0.4) -- (0.3,-0.5);
        \draw[red!70, line width=1.2pt, ->] (0.3,-0.5) -- (0.5,-1.3);
        \draw[red!70, line width=1.2pt, ->] (0.5,-1.3) -- (1.3,-1.5);
        \draw[red!70, line width=1.2pt, ->] (1.3,-1.5) -- (1.8,-0.8);
        \draw[red!70, line width=1.2pt, ->] (1.8,-0.8) -- (1,0.4);
        
        \draw[red!70, line width=1.2pt, ->] (2.7,0.5) -- (3.5,0);
        \draw[red!70, line width=1.2pt, ->] (3.5,0) -- (4.2,-0.5);
        \draw[red!70, line width=1.2pt, ->] (4.2,-0.5) -- (3.8,-1.2);
        \draw[red!70, line width=1.2pt, ->] (3.8,-1.2) -- (2.7,0.5);
        
        \draw[red!70, line width=1.2pt, ->] (2.8,2.8) -- (3.3,3.5);
        \draw[red!70, line width=1.2pt, ->] (3.3,3.5) -- (2.9,4.2);
        \draw[red!70, line width=1.2pt, ->] (2.9,4.2) -- (2,3.8);
        \draw[red!70, line width=1.2pt, ->] (2,3.8) -- (2.8,2.8);
        
        \fill[black] (0,0) circle (0.08);
        \node[below left, black, font=\small] at (3.1,1.5) {$x$};
        
        \fill[black] (1,0.4) circle (0.06);
        \fill[black] (1.8,0) circle (0.06);
        \fill[black] (2.7,0.5) circle (0.06);
        \fill[black] (3.2,1.3) circle (0.08);
        
        \fill[black] (3.5,2.2) circle (0.08);
        \node[right, black, font=\small] at (3.1,2.2) {$y$};
        
        \fill[black] (2.8,2.8) circle (0.06);
        \fill[black] (1.9,2.9) circle (0.06);
        \fill[black] (1,2.6) circle (0.06);
        \fill[black] (0.3,1.8) circle (0.06);
        \fill[black] (0,0.9) circle (0.06);
        
        \fill[black] (0.3,-0.5) circle (0.05);
        \fill[black] (0.5,-1.3) circle (0.05);
        \fill[black] (1.3,-1.5) circle (0.05);
        \fill[black] (1.8,-0.8) circle (0.05);
        
        \fill[black] (3.5,0) circle (0.05);
        \fill[black] (4.2,-0.5) circle (0.05);
        \fill[black] (3.8,-1.2) circle (0.05);
        
        \fill[black] (3.3,3.5) circle (0.05);
        \fill[black] (2.9,4.2) circle (0.05);
        \fill[black] (2,3.8) circle (0.05);
        
        \fill[black] (4.3,2.5) circle (0.05);
        \fill[black] (4.7,3.2) circle (0.05);
        \fill[black] (4.2,3.8) circle (0.05);
        
        \draw[blue, line width=1.5pt, ->] (4, 0.5) -- (4.7, 0.8);
        \draw[blue, line width=1.5pt, ->] (4.7, 0.8) -- (5, 0.1);
        \draw[blue, line width=1.5pt, ->] (5, 0.1) -- (4.4, -0.3);
        \draw[blue, line width=1.5pt, ->] (4.4, -0.3) -- (4, 0.5);
        \node[blue, font=\small] at (4.5, 0.3) {$\gamma_1$};
        
        \fill[black] (4, 0.5) circle (0.05);
        \fill[black] (4.7, 0.8) circle (0.05);
        \fill[black] (5, 0.1) circle (0.05);
        \fill[black] (4.4, -0.3) circle (0.05);
        
        \draw[blue, line width=1.5pt, ->] (4.3, 2) -- (5.1, 2.2);
        \draw[blue, line width=1.5pt, ->] (5.1, 2.2) -- (5.3, 1.4);
        \draw[blue, line width=1.5pt, ->] (5.3, 1.4) -- (4.3, 2);
        \node[blue, font=\small] at (4.8, 1.9) {$\gamma_2$};
        
        \fill[black] (4.3, 2) circle (0.05);
        \fill[black] (5.1, 2.2) circle (0.05);
        \fill[black] (5.3, 1.4) circle (0.05);
        
        \node[red, font=\small] at (1.5, 1.5) {$\tilde{c}$};
    \end{scope}
    
    \node[font=\large, draw, rounded corners, fill=yellow!20, inner sep=8pt] 
        at (6, -3) {$W((\tilde{c}, \tilde{\gamma})) = -W((c, \gamma))$};
        
    \node[text width=6cm, font=\footnotesize, align=left] at (6, -4.5) {
        \textcolor{red}{Red solid: Main closed walk}\\
        \textcolor{red!70}{Red light: Intermediate cycles}\\
        \textcolor{blue}{Blue: Linear subdigraphs ($\gamma$ and $\tilde{\gamma}$)}\\
        \textcolor{red}{Red dashed: Incorporated cycle $\gamma_y$}\\
        Black: All vertices
    };
\end{tikzpicture}
\caption{The closed walk $c$ with intermediate cycles encounters a vertex $y$ that belongs to cycle $\gamma_y$ in the linear subdigraph $\gamma$. The involution transforms the pair by incorporating the cycle $\gamma_y$ into the closed walk.}
\label{fig:closed_walk_involution}
\end{figure}
Now, we prove for the Case $r \leq n.$ Let $A = \{(c, \gamma) : c \text{ is a closed walk of length } \geq 1, \gamma \text{ is a linear subdigraph, and } L(c) + L(\gamma) = r\}$. Define the sum:
\[
S = \sum_{(c, \gamma) \in A} W((c, \gamma)) + r\ell_r,
\]
where $W((c, \gamma)) = (-1)^{c(\gamma)}w(c)w(\gamma)$. The left-hand side of equation (2) equals $S$. We partition $A$ into two disjoint subsets in the following way:
\begin{itemize}
    \item \text{BAD pairs} $B = \{(c, \gamma) \in A : c \cap \gamma \neq \varnothing \text{ or } c \text{ is not a simple cycle}\}$
    \item \text{GOOD pairs} $A \setminus B = \{(c, \gamma) \in A : c \cap \gamma = \varnothing \text{ and } c \text{ is a simple cycle}\}.$
\end{itemize}
Notice that the weights in $B$ cancel pairwise via the sign-reversing involution (same as in the $r > n$ case). Also, from the construction of the set $A\setminus B,$ it is easy to see that each GOOD pair $(c, \gamma) \in A \setminus B$ corresponds to a decomposition of a linear subdigraph $\dot{\gamma} = c \cup \gamma$ on the vertex set $\{v_1, v_2, \ldots, v_r\}$. Now, our claim is that, for each linear subdigraph $\dot{\gamma}$ on $r$ vertices, there are exactly $r$ GOOD pairs in $A \setminus B$.

In fact, for each vertex $v_i \in \{v_1, \ldots, v_r\},$ let $c_{v_i}$ be the cycle in $\dot{\gamma}$ containing vertex $v_i$. Now, set $\gamma_i = \dot{\gamma} \setminus \{c_{v_i}\}$ (remove this cycle), and form the pair $(c_{v_i}, \gamma_i).$ Clearly, $(c_{v_i}, \gamma_i) \in A \setminus B.$ Since we can choose any of the $r$ vertices as our starting point, we get exactly $r$ distinct GOOD pairs (see Figure \ref{fig:lsd-with-GOOD-pairs}, here we discuss it with one example).

\begin{figure}[H]
\centering
\begin{tikzpicture}[>=Stealth, thick, scale=1.3]
    
    \begin{scope}[xshift=-6cm, yshift=-2cm]
        \node at (1, -.7) {$\dot{\gamma}$};
        \draw[->, line width=2.5pt, blue!50!purple] (2.5,1.5) -- (3.9, 1.5);
       \node[circle, draw, fill=blue!20, minimum size=0.05] (v1) at (0, 2.2) {$v_1$};
        \node[circle, draw, fill=blue!20, minimum size=0.5cm] (v2) at (2, 2.8) {$v_2$};
        \node[circle, draw, fill=green!20, minimum size=0.5cm] (v3) at (0, 0.3) {$v_3$};
        \node[circle, draw, fill=green!20, minimum size=0.5cm] (v4) at (2, -0.2) {$v_4$};
        \node[circle, draw, fill=orange!20, minimum size=0.5cm] (v5) at (1.5, 1.5) {$v_5$};
        
        \draw[->, very thick, blue] (v1) -- (v2);
        \draw[->, very thick, blue] (v2) to[bend left=30] (v1);
        
        \draw[->, very thick, green!70!black] (v3) -- (v4);
        \draw[->, very thick, green!70!black] (v4) to[bend left=30] (v3);
        
        \draw[->, very thick, orange] (v5) to[loop right] (v5);
        
    \end{scope}
    
    
    \begin{scope}[xshift=-1cm]
        
        \begin{scope}[yshift=2cm]
            \node[anchor=west] at (.4,-.3) {$c_{v_1}$};
            \node[circle, draw, fill=blue!20, minimum size=0.05] (p1v1) at (0, 0) {$v_1$};
            \node[circle, draw, fill=blue!20, minimum size=0.05] (p1v2) at (1.2, 0.3) {$v_2$};
            \draw[->, very thick, blue] (p1v1) -- (p1v2);
            \draw[->, very thick, blue] (p1v2) to[bend left=25] (p1v1);
            
            \node[circle, draw, fill=green!20, minimum size=0.5cm] (p1v3) at (2.8, 0.2) {\tiny $v_3$};
            \node[circle, draw, fill=green!20, minimum size=0.5cm] (p1v4) at (3.8, -0.1) {\tiny $v_4$};
            \node[circle, draw, fill=orange!20, minimum size=0.5cm] (p1v5) at (4.7, 0.2) {\tiny $v_5$};
            \draw[->, thick, blue] (p1v3) -- (p1v4);
            \draw[->, thick, blue] (p1v4) to[bend left=20] (p1v3);
            \draw[->, very thick, orange] (p1v5) to[loop right, looseness=8] (p1v5);
            \node[blue, right] at (4.2, -.4) {$\gamma_1$};
        \end{scope}
        
        \begin{scope}[yshift=0.8cm]
            \node[anchor=west] at (.4,-.25) {$c_{v_2}$};
            \node[circle, draw, fill=blue!20, minimum size=0.05] (p2v1) at (0, 0) {$v_1$};
            \node[circle, draw, fill=blue!20, minimum size=0.05] (p2v2) at (1.2, 0.3) {$v_2$};
            \draw[->, very thick, blue] (p2v2) -- (p2v1);
            \draw[->, very thick, blue] (p2v1) to[bend left=25] (p2v2);
            
            \node[circle, draw, fill=green!20, minimum size=0.5cm] (p2v3) at (2.8, 0.2) {\tiny $v_3$};
            \node[circle, draw, fill=green!20, minimum size=0.5cm] (p2v4) at (3.8, -0.1) {\tiny $v_4$};
            \node[circle, draw, fill=orange!20, minimum size=0.5cm] (p2v5) at (4.7, 0.2) {\tiny $v_5$};
            \draw[->, thick, blue] (p2v3) -- (p2v4);
            \draw[->, thick, blue] (p2v4) to[bend left=20] (p2v3);
            \draw[->, very thick, orange] (p2v5) to[loop right, looseness=8] (p2v5);
            \node[blue, right] at (4.2, -.4) {$\gamma_2$};
        \end{scope}
        
        \begin{scope}[yshift=-0.4cm]
            \node[anchor=west] at (.4,-.3) {$c_{v_3}$};
            \node[circle, draw, fill=green!20, minimum size=0.5cm] (p3v3) at (0, 0) {$v_3$};
            \node[circle, draw, fill=green!20, minimum size=0.5cm] (p3v4) at (1.2, 0.3) {$v_4$};
            \draw[->, very thick, green!70!black] (p3v3) -- (p3v4);
            \draw[->, very thick, green!70!black] (p3v4) to[bend left=25] (p3v3);
            
            \node[circle, draw, fill=blue!20, minimum size=0.05] (p3v1) at (2.8, 0.3) {\tiny $v_1$};
            \node[circle, draw, fill=blue!20, minimum size=0.05] (p3v2) at (3.8, -0.1) {\tiny $v_2$};
            \node[circle, draw, fill=orange!20, minimum size=0.5cm] (p3v5) at (4.7, 0.2) {\tiny $v_5$};
            \draw[->, thick, blue] (p3v1) -- (p3v2);
            \draw[->, thick, blue] (p3v2) to[bend left=20] (p3v1);
            \draw[->, very thick, orange] (p3v5) to[loop right, looseness=8] (p3v5);
            \node[blue, right] at (4.2, -.4) {$\gamma_3$};
        \end{scope}
        
        \begin{scope}[yshift=-1.6cm]
            \node[anchor=west] at (.4,-.25) {$c_{v_4}$};
            \node[circle, draw, fill=green!20, minimum size=0.5cm] (p4v3) at (0, 0) {$v_3$};
            \node[circle, draw, fill=green!20, minimum size=0.5cm] (p4v4) at (1.2, 0.3) {$v_4$};
            \draw[->, very thick, green!70!black] (p4v4) -- (p4v3);
            \draw[->, very thick, green!70!black] (p4v3) to[bend left=25] (p4v4);
            
            \node[circle, draw, fill=blue!20, minimum size=0.05] (p4v1) at (2.8, 0.3) {\tiny $v_1$};
            \node[circle, draw, fill=blue!20, minimum size=0.05] (p4v2) at (3.8, -0.1) {\tiny $v_2$};
            \node[circle, draw, fill=orange!20, minimum size=0.5cm] (p4v5) at (4.7, 0.2) {\tiny $v_5$};
            \draw[->, thick, blue] (p4v1) -- (p4v2);
            \draw[->, thick, blue] (p4v2) to[bend left=20] (p4v1);
            \draw[->, very thick, orange] (p4v5) to[loop right, looseness=8] (p4v5);
            \node[blue, right] at (4.2, -.4) {$\gamma_4$};
        \end{scope}
        
        \begin{scope}[yshift=-2.8cm]
            \node[anchor=west] at (.4,-.5) {$c_{v_5}$};
            \node[circle, draw, fill=orange!20, minimum size=0.5cm] (p5v5) at (0.5, 0) {$v_5$};
            \draw[->, very thick, orange] (p5v5) to[loop right, looseness=5] (p5v5);
            
            \node[circle, draw, fill=blue!20, minimum size=0.05] (p5v1) at (2.8, 0.3) {\tiny $v_1$};
            \node[circle, draw, fill=blue!20, minimum size=0.05] (p5v2) at (3.8, 0.15) {\tiny $v_2$};
            \node[circle, draw, fill=green!20, minimum size=0.5cm] (p5v3) at (2.8, -0.3) {\tiny $v_3$};
            \node[circle, draw, fill=green!20, minimum size=0.5cm] (p5v4) at (3.8, -0.5) {\tiny $v_4$};
            \draw[->, thick, blue] (p5v1) -- (p5v2);
            \draw[->, thick, blue] (p5v2) to[bend left=20] (p5v1);
            \draw[->, thick, blue] (p5v3) -- (p5v4);
            \draw[->, thick, blue] (p5v4) to[bend left=20] (p5v3);
            \node[blue, right] at (3.2, -.8) {$\gamma_5$};
        \end{scope}
    \end{scope}
\end{tikzpicture}
\caption{A linear subdigraph $\dot{\gamma}$ with 5 vertices decomposes into exactly 5 GOOD pairs.}
\label{fig:lsd-with-GOOD-pairs}
\end{figure}
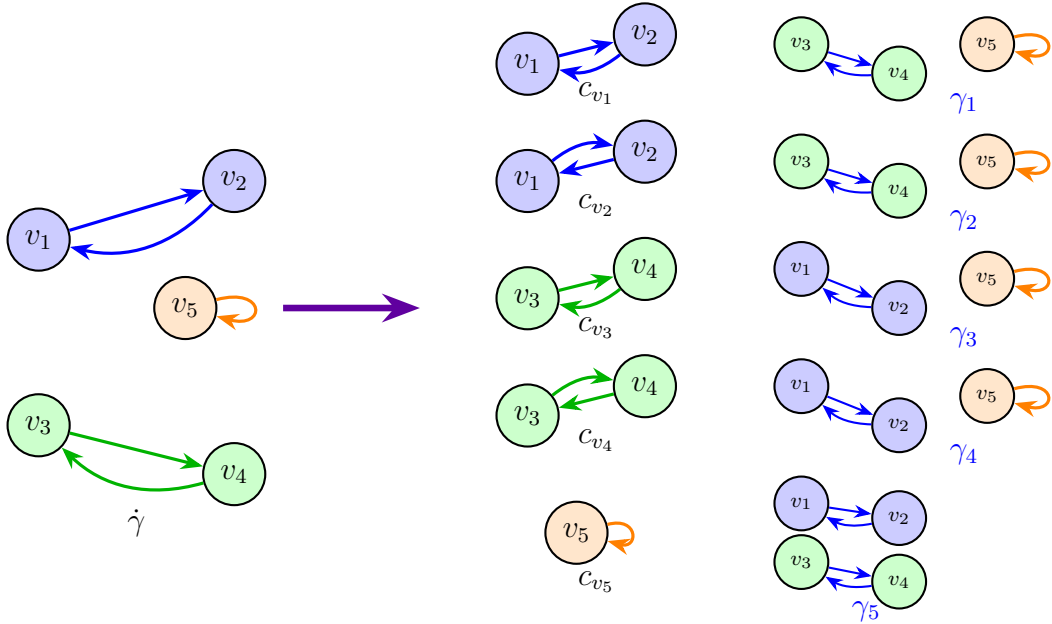
Moreover, for each linear subdigraph $\dot{\gamma}$ on $r$ vertices, the total weight from these $r$ GOOD pairs is
\[
\sum_{i=1}^{r} W((c_{v_i}, \gamma_i)) = \sum_{i=1}^{r} (-1)^{c(\gamma_i)} w(c_{v_i}) w(\gamma_i).
\]
Since $c(\gamma_i) = c(\dot{\gamma}) - 1$ (removing one cycle from $\dot{\gamma}$) and $w(c_{v_i}) w(\gamma_i) = w(\dot{\gamma}),$ we have
\[
\sum_{i=1}^{r} W((c_{v_i}, \gamma_i)) = r \cdot (-1)^{c(\dot{\gamma}) - 1} w(\dot{\gamma}).
\]
Now, the contribution from $r\ell_r$ term is $r \cdot (-1)^{c(\dot{\gamma})} w(\dot{\gamma}).$ Therefore, 
\[
r(-1)^{c(\dot{\gamma})-1}w(\dot{\gamma}) + r(-1)^{c(\dot{\gamma})}w(\dot{\gamma}) = r(-1)^{c(\dot{\gamma})}[(-1)^{-1} + 1] w(\dot{\gamma}) = 0
\]
Consequently, $S = 0$. This completes the proof.

\end{proof}
\begin{proof}[Proof of Theorem \ref{Thm: Trace cayley Hamilton Thm}]
Proof of this theorem follows from Lemmas \ref{Lemma:Coeeffiient of Cha poly of A is sign sum of LSD}, \ref{Lemma:Tr(Ak)=Ck} and Theorem \ref{Thm:Combinatorial proof of Trace Cayley Hamilton thm}.    
\end{proof}

\textbf{Acknowledgement:} The author was supported by the NBHM research project grant
 02011/29/2025 NBHM(R. P)/R\&D II/11951. The author would like to thank Sajal Kumar Mukherjee for all the discussions.
\vspace{3mm}

\textbf{Data Availability Statement:} Availability of data and materials is not
 applicable.

 \vspace{3mm}
 \textbf{Conflict of interest:} The authors declare that they have no conflict of
 interest

\bibliographystyle{amsplain}
\bibliography{gen-inv-lcp-t}

\end{document}